
\documentclass[12pt]{amsart}

\voffset=-1.4mm
\oddsidemargin=17pt \evensidemargin=17pt
\headheight=9pt     \topmargin=26pt
\textheight=576pt   \textwidth=440.8pt
\parskip=0pt plus 4pt

\usepackage{amssymb}
\usepackage{bm}
\usepackage{graphicx}
\usepackage{psfrag}
\usepackage{color}
\usepackage{enumerate}
\usepackage{url}

\usepackage{algpseudocode}

\usepackage{mathtools}

\usepackage{xy}
\input xy
\xyoption{all}

\newcommand{\excise}[1]{}

\newtheorem{thm}{Theorem}[section]
\newtheorem{lemma}[thm]{Lemma}

\newtheorem{cor}[thm]{Corollary}
\newtheorem{prop}[thm]{Proposition}

\theoremstyle{definition}

\newtheorem{example}[thm]{Example}
\newtheorem{remark}[thm]{Remark}
\newtheorem{defn}[thm]{Definition}

\newtheorem{notation}[thm]{Notation}
\newtheorem{question}[thm]{Question}
\newtheorem{prob}[thm]{Problem}

\numberwithin{equation}{section}



\newcommand{\ring}[1]{\ensuremath{\mathbb{#1}}}

\renewcommand\>{\rangle}
\newcommand\<{\langle}

\newcommand\ZZ{\ring{Z}}






 %






\newcounter{case}


\begin{document}

\mbox{}
\title[On arithmetical numerical monoids with some generators omitted]{On arithmetical numerical monoids \\ with some generators omitted}

\author{Sung Hyup Lee}
\address{Mathematics Department\\University of California Berkeley\\Berkeley, CA 94720}
\email{sunghlee@berkeley.edu}

\author{Christopher O'Neill}
\address{Mathematics Department\\University of California Davis\\Davis, CA 95616}
\email{coneill@math.ucdavis.edu}

\author{Brandon Van Over}
\address{Mathematics Department\\University of California Berkeley\\Berkeley, CA 94720}
\email{bvanover@berkeley.edu}

\date{\today}

\begin{abstract}
Numerical monoids (cofinite, additive submonoids of the non-negative integers) arise frequently in additive combinatorics, and have recently been studied in the context of factorization theory.  
Arithmetical numerical monoids, which are minimally generated by arithmetic sequences, are particularly well-behaved, admitting closed forms for many invariants that are difficult to compute in the general case.  
In~this paper, we answer the question ``when does omitting generators from an arithmetical numerical monoid $S$ preserve its (well-understood) set of length sets and/or Frobenius number?''\ in two extremal cases: (i) we characterize which individual generators can be omitted from $S$ without changing the set of length sets or Frobenius number; and (ii) we prove that under certain conditions, nearly every generator of $S$ can be omitted without changing its set of length sets or Frobenius number.  
\end{abstract}

\maketitle


\section{Introduction}
\label{sec:intro}

\emph{Numerical monoids} (additively closed subsets of the non-negative integers) were first studied in the context of the Frobenius coin-exchange problem, which asks for the smallest integer that cannot be evenly changed using a given collection of relatively prime coin values.  More recently, numerical monoids have been examined in the realm of factorization theory, which aims to classify and quantify the different ways monoid elements can be expressed as sums of generators (called \emph{factorizations}).  

One of the central objects of study in factorization theory is the set $\mathcal L(S)$ of length sets of a given monoid $S$ (Definition~\ref{d:numerical}), which encodes a large amount of information about the factorization structure of $S$.  Several combinatorially-flavored factorization invariants, such as elasticity and the delta set \cite{nonuniq}, are derived directly from the set of length sets, and the Narkiewicz conjecture \cite{narkiewicz}, one of the largest open problems in factorization theory, is deeply rooted in the use of set of length sets as a monoid isomorphism invariant.  That said, there are very few monoids whose set of length sets is completely understood \cite{lensetprogress}.  

\emph{Arithmetical} numerical monoids, whose minimal generating sets are arithmetic sequences, are considered to be one of the most well-behaved families of numerical monoids.  Indeed, many monoid-theoretic invariants that are difficult to obtain for a general numerical monoid $S$ admit concise closed formulas in terms of $a$, $w$ and $d$ for an arithmetical numerical monoid $S = \<a, a + d, \ldots, a + wd\>$.  A classical example is the Frobenius number $F(S)$ (taking its name from the aforementioned coin-exchange problem), defined as the largest integer that lies outside of $S$ \cite{diophantinefrob}.  Closed formulas are also known for the genus, Ap\'ery set, delta set, and catenary degree of an arithmetical numerical monoid \cite{catenaryarithseq,omidali,numerical}.  Arithmetical numerical monoids are also one of the only families of numerical monoids (or monoids in general) whose set of length sets is completely parametrized \cite{setoflengthsets}.  

In this paper, we examine numerical monoids obtained by omitting generators from an arithmetical numerical monoid $S = \<a, a + d, \ldots, a + wd\>$.  Intuitively, if a generator $a + rd$ that is ``deep in the middle of the list'' is omitted, then the factorization structure remains largly unchanged, as in any factorization of some $n \in S$, we can substitute
$$2(a + rd) = (a + (r-1)d) + (a + (r+1)d),$$
or more generally,
$$(a + rd) + (a + r'd) = (a + (r-1)d) + (a + (r'+1)d).$$
These concise minimal relations are one of the reasons arithmetical numerical monoids are easier to work with than more general numerical monoids \cite{omidali}.  

Our main question is as follows.  Note that removing the first or last generator of $S$ could instead be accomplished by simply modifying $a$ and $w$ appropriately.  

\begin{question}\label{q:mainquestion}
Fix an arithmetical numerical monoid $S = \<a, a + d, \ldots, a + wd\>$ with $w < a$ and $\gcd(a,d) = 1$, fix a subset $G \subset \{a + d, \ldots, a + (w-1)d\}$ of the generators of $S$, and let 
$$S' = \<a + rd : 0 \le r \le w \textnormal{ and } a + rd \notin G\>.$$
\begin{enumerate}[(a)]
\item 
\label{q:mainquestion:len}
Under what conditions does $\mathcal L(S) = \mathcal L(S')$?  

\item 
\label{q:mainquestion:frob}
Under what conditions does $F(S) = F(S')$?  

\end{enumerate}
\end{question}

Our results are threefold.  First, we give an algorithm for determining whether $\mathcal L(S) = \mathcal L(S')$ in Question~\ref{q:mainquestion}\eqref{q:mainquestion:len}.  The algorithm, and its implementation, are discussed in Remark~\ref{r:algorithm}.  Second, we prove the rather surprising fact that if $a$ is sufficiently large, then the monoid $S_* = \<a, a + d, a + (w-1)d, a + wd\>$ has set of length sets and Frobenius number identical to $S$, as does every numerical monoid in between (Corollaries~\ref{c:missingalllen} and~\ref{c:missingallfrob}).  These two results fit into the recent literature concerning ``shifted'' numerical monoids; see Remark~\ref{r:shifted}.  Lastly, we completely answer both parts of Question~\ref{q:mainquestion} when precisely one generator is omitted from $S$ (Corollary~\ref{c:miss1genlen} and Theorem~\ref{t:miss1genfrob}).

\subsection*{Acknowledgements}

The authors would like to thank 
Vadim Ponomarenko for many helpful conversations.

\subsection*{Dedication}

The third author would like to dedicate this paper to his late father, Thomas Van Over, who taught him the perseverence necessary to complete this paper.

%




\section{Background}
\label{sec:background}

\begin{defn}\label{d:numerical}
A \emph{numerical monoid} $S$ is a cofinite, additive submonoid of $\ZZ_{\ge 0}$.  When we write $S = \<n_1, \ldots, n_w\>$, we assume $n_1 < \cdots < n_w$ is the unique minimal generating set of $S$ (with respect to containment).  
A \emph{factorization} of $n \in S$ is an expression 
$$n = z_1n_1 + \cdots + z_wn_w$$
of $n$ as a sum of generators of $S$, and the \emph{length} of a factorization is the number $z_1 + \cdots + z_w$.  
of generators appearing the sum.  The \emph{length set} of $n$ is the set 
$$\mathsf L_S(n) = \{z_1 + \cdots + z_w : n = z_1n_1 + \cdots + z_wn_w, z_i \in \ZZ_{\ge 0}\},$$
of all possible factorization lengths of $n$, and the \emph{set of length sets} of $S$ is denoted 
$$\mathcal L(S) = \{\mathsf L_S(n) : n \in S\}.$$
The \emph{maximum} and \emph{minimum} factorization length functions are defined as 
$$\mathsf M_S(n) = \max \mathsf L_S(n) \qquad \text{ and } \qquad \mathsf m_S(n) = \min \mathsf L_S(n),$$
respectively.  A numerical monoid $S$ is \emph{arithmetical} if $S = \<a, a + d, \ldots, a + wd\>$ with $\gcd(a,d) = 1$ and $w < a$ (these conditions ensure $S$ is cofinite and minimally generated).  
\end{defn}

Central to many of the arguments in this paper is the following membership criterion for arithmetical numerical monoids.  

\begin{lemma}[{\cite[Theorem~3.1]{omidali}, \cite[Theorem~2.2]{setoflengthsets}}]\label{l:omidali}
Fix $c_1, c_2 \in \ZZ$, and let $n = c_1a + c_2d$. 
\begin{enumerate}[(a)]
\item 
\label{l:omidali:membership}
If $0 \le c_2 < a$, then $n \in S$ if and only if $c_2 \le c_1w$.  

\item 
\label{l:omidali:len}
More generally, $c_1 \in \mathsf L_S(n)$ if and only if $c_2 \le c_1w$.  In particular, $\mathsf M_S(n) = c_1$ and
$$|\mathsf L_S(n)| = \displaystyle \bigg\lfloor \frac{c_1w - c_2}{a + wd} \bigg\rfloor + 1.$$

\end{enumerate}
\end{lemma}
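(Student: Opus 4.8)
The plan is to reduce everything to a single combinatorial observation about how the exponent vector of a factorization simultaneously records its length and the ``$d$-part'' of $n$. A factorization $n = \sum_{r=0}^w z_r(a+rd)$ with each $z_r \ge 0$ has length $\ell = \sum_r z_r$ and satisfies $n = \ell a + t\,d$ where $t = \sum_r r z_r$. The core claim I would establish first is: \emph{for any integers $\ell \ge 0$ and $t$, there exist nonnegative integers $z_0, \dots, z_w$ with $\sum_r z_r = \ell$ and $\sum_r r z_r = t$ if and only if $0 \le t \le \ell w$.} The forward direction is immediate, since $\sum_r r z_r$ is a nonnegative sum bounded above by $w\sum_r z_r$. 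For the converse I would give an explicit construction: write $t = qw + s$ with $0 \le s < w$, place $q$ generators at index $w$, one at index $s$, and the remaining $\ell - q - 1$ at index $0$ (the boundary case $t = \ell w$ being handled by taking all $\ell$ generators at index $w$); the inequality $t \le \ell w$ is exactly what guarantees enough generators are available.

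Next I would parametrize \emph{all} factorizations of the fixed element $n = c_1 a + c_2 d$. If a factorization has length $\ell$ and $d$-part $t$, then $\ell a + t d = c_1 a + c_2 d$, so $(\ell - c_1)a = (c_2 - t)d$; since $\gcd(a,d) = 1$ this forces $\ell = c_1 + jd$ and $t = c_2 - ja$ for a unique integer $j$. Combining this with the core claim, $n$ admits a factorization of length $c_1 + jd$ if and only if $0 \le c_2 - ja \le (c_1 + jd)w$. This equivalence is the engine driving all three conclusions.

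Now I would invoke the hypothesis $0 \le c_2 < a$. The lower bound $c_2 - ja \ge 0$ is equivalent to $ja \le c_2$, which for integer $j$ and $0 \le c_2 < a$ holds exactly when $j \le 0$; hence every attainable length $\ell = c_1 + jd$ satisfies $\ell \le c_1$, and $j = 0$ (length $c_1$, $d$-part $c_2$) is attainable precisely when the upper bound $c_2 \le c_1 w$ holds. This at once gives the equivalence ``$c_1 \in \mathsf L_S(n) \iff c_2 \le c_1 w$'' of part~\eqref{l:omidali:len}, shows $\mathsf M_S(n) = c_1$ when it holds, and yields part~\eqref{l:omidali:membership}: since $n \in S \iff \mathsf L_S(n) \ne \nothing$, it suffices to note that when $c_2 > c_1 w$ the upper bound $c_2 - ja \le (c_1 + jd)w$, rearranged as $c_2 - c_1 w \le j(a+wd)$, fails for every $j \le 0$ as well, so no length is attainable.

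Finally, for the cardinality I would count the integers $j$ satisfying $\tfrac{c_2 - c_1 w}{a + wd} \le j \le 0$, the left inequality being that same rearrangement of the upper bound. Since distinct values of $j$ produce distinct lengths $c_1 + jd$ (as $d \ge 1$), the size of $\mathsf L_S(n)$ equals this count, which evaluates to $\lfloor (c_1 w - c_2)/(a + wd)\rfloor + 1$ via the identity $-\lceil x \rceil = \lfloor -x \rfloor$. The only genuine subtlety, and the step I would be most careful about, is the use of $0 \le c_2 < a$ to pin $j = 0$ as the maximizing choice: without it the length set could extend above $c_1$, and both the clean formula $\mathsf M_S(n) = c_1$ and the reduction of membership to a single inequality would break.
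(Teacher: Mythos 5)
The paper does not actually prove this lemma---it is imported wholesale from \cite{omidali} and \cite{setoflengthsets}---so there is no internal argument to compare yours against; your proposal has to stand on its own, and it does. Every step checks out: the core claim's construction (write $t = qw + s$, put $q$ generators at index $w$, one at index $s$, the rest at index $0$; when $s > 0$ the strict inequality $qw < \ell w$ gives $q \le \ell - 1$, and the case $t = \ell w$ is handled separately) is valid; the relation $(\ell - c_1)a = (c_2 - t)d$ together with $\gcd(a,d) = 1$ correctly forces $\ell = c_1 + jd$, $t = c_2 - ja$; the hypothesis $0 \le c_2 < a$ does exactly the work you say it does, pinning attainable lengths to $j \le 0$; and the count of integers $j$ with $(c_2 - c_1w)/(a+wd) \le j \le 0$ is indeed $\lfloor (c_1w - c_2)/(a+wd)\rfloor + 1$, with distinctness of the lengths $c_1 + jd$ guaranteed by $d \ge 1$. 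One small bonus of your framework worth stating explicitly: it makes transparent that the ``more generally'' in part~(b) still implicitly requires $c_2 \ge 0$ (for $c_2 < 0$ the equivalence fails, since a length-$c_1$ factorization forces $d$-part exactly $c_2 \ge 0$), while the conclusions $\mathsf M_S(n) = c_1$ and the cardinality formula genuinely require $c_2 < a$ as well---precisely the subtlety you flag in your closing sentence.
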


\section{An algorithm for equality of set of length sets}
\label{sec:algorithm}

\begin{notation}\label{n:sstar}
In this section, fix $a, d, w \in \ZZ_{\ge 1}$ with $\gcd(a, d) = 1$ and $4 \le w < a$.  Write $n_i = a + id$ for $0 \le i \le w$, and let 
$$S = \<n_0, \ldots, n_w\> \qquad \text{ and } \qquad S_* = \<n_0, n_1, n_{w-1}, n_w\>.$$
\end{notation}

The main result of this section is Theorem~\ref{t:algorithm}, which states that $L_S(n)$ and $L_{S_*}(n)$ agree for $n$ sufficiently large.  This has several immediate consequences.  First, this yields an algorithm to determine when a monoid $S'$ obtained by omitting generators of $S$ satisfies $\mathcal L(S) = \mathcal L(S')$ (Remark~\ref{r:algorithm}).  Additionally, Theorem~\ref{t:algorithm} implies that when $a$~is sufficiently large, $\mathcal L(S) = \mathcal L(S_*)$ and $F(S) = F(S_*)$ (Corollaries~\ref{c:missingalllen} and~\ref{c:missingallfrob}), which effectively gives an upper bound on when Question~\ref{q:mainquestion} has a nontrivial answer.  Corollaries~\ref{c:missingalllen} and~\ref{c:missingallfrob} closely resemble recent results for ``shifted'' numerical monoids; see Remark~\ref{r:shifted} for more detail.  

\begin{thm}\label{t:algorithm}
Fix $c_1, c_2 \in \ZZ$ with $0 \le c_2 < a$, and let $n = c_1a + c_2d$.  Whenever $n > (w - 3)(a + wd)$, the following are equivalent: (i)~$n \in S_*$; (ii)~$n \in S$; and (iii)~$L_S(n) = L_{S_*}(n)$.  
\end{thm}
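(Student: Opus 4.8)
The plan is to prove the cycle (i)$\Rightarrow$(ii)$\Rightarrow$(iii)$\Rightarrow$(i), with all of the work concentrated in (ii)$\Rightarrow$(iii). Since $S_*\subseteq S$, every $S_*$-factorization of $n$ is also an $S$-factorization, so $\mathsf L_{S_*}(n)\subseteq\mathsf L_S(n)$ always; this gives (i)$\Rightarrow$(ii) for free and supplies one of the two inclusions needed for (iii). It also means that if, under the assumption $n\in S$, I can establish the reverse inclusion $\mathsf L_S(n)\subseteq\mathsf L_{S_*}(n)$, then I simultaneously obtain $\mathsf L_{S_*}(n)=\mathsf L_S(n)\neq\nothing$ and hence $n\in S_*$. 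Thus proving (ii)$\Rightarrow$(iii) will also deliver (i), while (iii)$\Rightarrow$(i) is just the remark that a nonempty $\mathsf L_{S_*}(n)$ forces $n\in S_*$.

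First I would record the exact shape of $\mathsf L_S(n)$. By Lemma~\ref{l:omidali}, an integer $\ell$ lies in $\mathsf L_S(n)$ exactly when $n=\ell a+ed$ with $0\le e\le w\ell$. Comparing $n=\ell a+ed=c_1a+c_2d$ forces $d\mid(c_1-\ell)$, so the admissible lengths are precisely $\ell=c_1-dk$ for $0\le k\le K$, where $K=\lfloor(wc_1-c_2)/(a+wd)\rfloor$; in particular $\mathsf M_S(n)=c_1$ and $\mathsf m_S(n)=c_1-dK$. The key estimate is a lower bound on this minimum length: using $K\le(wc_1-c_2)/(a+wd)$, a direct simplification gives $\mathsf m_S(n)=c_1-dK\ge\frac{ac_1+c_2d}{a+wd}=\frac{n}{a+wd}$. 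Hence whenever $n>(w-3)(a+wd)$ we get $\mathsf m_S(n)>w-3$, and since $\mathsf m_S(n)$ is an integer, $\mathsf m_S(n)\ge w-2$. Consequently every length occurring in $\mathsf L_S(n)$ is at least $w-2$.

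The heart of the argument is to show that any length $\ell\ge w-2$ realizable in $S$ is also realizable in $S_*$. Here Lemma~\ref{l:omidali} does not apply directly, since $S_*$ is not arithmetical, so I would argue combinatorially. Fix such an $\ell$ and the corresponding $e=(n-\ell a)/d$, and in a prospective $S_*$-factorization of length $\ell$ let $j$ of the generators be \emph{high} (equal to $n_{w-1}$ or $n_w$) and $\ell-j$ be \emph{low} (equal to $n_0$ or $n_1$). The low part contributes any integer weight in $[0,\ell-j]$ and the high part any integer weight in $[j(w-1),jw]$, so the attainable values of $e$ fill the interval $[\,j(w-1),\,\ell+j(w-1)\,]$. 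As $j$ runs from $0$ to $\ell$ these intervals have left endpoints spaced $(w-1)$ apart, while each has length $\ell$; thus consecutive intervals abut precisely when $w-1\le\ell+1$, i.e.\ when $\ell\ge w-2$, in which case their union is the full range $[0,w\ell]$. Therefore, for $\ell\ge w-2$, the condition $0\le e\le w\ell$ is not merely necessary (as for $S$) but also sufficient for $\ell\in\mathsf L_{S_*}(n)$, giving $\ell\in\mathsf L_S(n)\iff\ell\in\mathsf L_{S_*}(n)$.

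Combining the two preceding steps completes the proof: by the minimum-length bound every element of $\mathsf L_S(n)$ is $\ge w-2$, and by the covering argument each such element also lies in $\mathsf L_{S_*}(n)$, so $\mathsf L_S(n)\subseteq\mathsf L_{S_*}(n)$; together with the automatic reverse inclusion this yields $\mathsf L_S(n)=\mathsf L_{S_*}(n)$ and $n\in S_*$, closing the cycle. I expect the interval-covering computation of the third paragraph to be the main obstacle, since it is the one place where the non-arithmetical monoid $S_*$ must be analyzed by hand, and it is exactly where the threshold $w-2$ — and hence the sharp exponent in the bound $(w-3)(a+wd)$ — is forced. Everything else reduces to bookkeeping with Lemma~\ref{l:omidali}.
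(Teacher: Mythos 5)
Your proof is correct, and its key step takes a genuinely different route from the paper's, though the skeleton is the same: both arguments first observe that $n > (w-3)(a+wd)$ forces every $\ell \in \mathsf L_S(n)$ to satisfy $\ell \ge w-2$ (since no generator exceeds $a+wd$), and then show every such length transfers to $S_*$. For the transfer step the paper performs explicit factorization surgery: writing $n - a\ell = pd$ and $p = qw + r$ with $0 \le r < w$, it starts from the length-$\ell$ factorization $n = (\ell-1-q)a + (a+rd) + q(a+wd)$ and, when $1 < r < w-1$, trades the offending middle generator away using either $(a+rd) + q(a+wd) = (w-r)(a+(w-1)d) + (q-(w-r-1))(a+wd)$ (valid when $q \ge w-r-1$) or $(\ell-1-q)a + (a+rd) = (\ell-q-r)a + r(a+d)$ (valid when $\ell - q \ge r$), with the bound $\ell \ge w-2$ guaranteeing that at least one of the two cases occurs. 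You instead characterize, for each $\ell \ge w-2$, exactly which coefficients $e$ with $n = \ell a + ed$ are reachable by length-$\ell$ factorizations in $S_*$: for each count $j$ of high generators the reachable $e$ form the full integer interval $[(w-1)j,\,(w-1)j+\ell]$, and these intervals cover $[0, w\ell]$ without gaps precisely when $\ell \ge w-2$. Your version buys a case-free, self-contained argument (the paper's proof borrows its setup from the proof of Theorem~\ref{t:miss1genlen}), a slightly stronger intermediate result --- an analogue of Lemma~\ref{l:omidali}(b) for $S_*$ valid for all lengths $\ell \ge w-2$ --- and a transparent explanation of why the threshold is exactly $w-2$; the paper's version buys concrete substitution identities of the kind emphasized in its introduction.

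One caveat, which you share with the paper rather than introduce: if $n \notin S$ then $\mathsf L_S(n) = \mathsf L_{S_*}(n) = \varnothing$, so (iii) holds vacuously while (i) and (ii) fail; your step ``(iii)$\Rightarrow$(i)'' silently assumes $\mathsf L_{S_*}(n) \neq \varnothing$, exactly as the paper's ``the other implications are immediate'' does. This is an imprecision in the theorem's statement (the equivalence is really (ii)$\Leftrightarrow$(i) together with (ii)$\Rightarrow$(iii)), not a defect of your argument relative to the paper's.
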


\begin{proof}
Suppose $n \in S$.  It suffices to prove $L_S(n) \subset L_{S_*}(n)$, as the other implications are immediate.  Fix $\ell \in \mathsf L_S(n)$, and as in the proof of Theorem~\ref{t:miss1genlen}, write $n - a\ell = pd$ and $p = qw + r$ for $p,q,r \in \ZZ$ with $0 \le r < w$, and consider the length $\ell$ factorization
$$n = (\ell - 1 - q)a + (a + rd) + q(a + wd).$$
This gives $\ell \in \mathsf L_{S_*}(n)$ unless $1 < r < w - 1$.  
By assumption, $\mathsf m_S(n) \ge w - 2$, so either
$$\text{(i) } q \ge w - r - 1
\qquad \text{ or } \qquad
\text{(ii) }\ell - q \ge r,$$
as otherwise $\ell = q + (\ell - q) < w - 2$.  If (i) holds, then we can write 
$$(a + rd) + q(a + wd) = (w - r)(a + (w-1)d) + (q - (w-r-1))(a + wd),$$
and if (ii) holds, then
$$(\ell - 1 - q)a + (a + rd) = (\ell - q - r)a + r(a + d).$$
In either case, we conclude $n \in S_*$ and $\ell \in L_{S_*}(n)$.
\end{proof}

\begin{remark}\label{r:algorithm}
Given a set $G \subset \{2, \ldots, w - 2\}$, the proof of Theorem~\ref{t:algorithm} yields an algorithm for determining whether $\mathcal L(S') = \mathcal L(S)$ for $S' = \<a + id : 0 \le i \le w, i \notin G\>$.  In particular, $\mathcal L(S') = \mathcal L(S)$ if and only if 
$$\{\mathsf L_S(n) : n \in S, \mathsf m_S(n) \le w - 3\} = \{\mathsf L_{S'}(n) : n \in S', \mathsf m_{S'}(n) \le w - 3\},$$
both of which only contain finitely many length sets, each of which can be readily computed from the (finite and computable) set of factorizations.  

The primary computational hurdle is computing the length sets of all elements $n \le (w - 3)(a + wd)$.  Thankfully, this can be done relatively quickly using dynamic programming \cite[Algorithm~3.7]{dynamicalg}.  In particular, this algorithm avoids computing the set of factorizations of each $n$, which has on the order of $n^{w-2}$ elements, and instead computes only the length set of each $n$, whose cardinality is linear in $n$.  

We will see in Corollary~\ref{c:miss1genlen} that omitting $a + d$ or $a + (w-1)d$ results in a monoid with distinct set of length sets from $S$, so the requirement that $1, w-1 \notin G$ in the above algorithm has little impact on its use investigating Question~\ref{q:mainquestion}.  
\end{remark}

All experimental evidence generated using the algorithm in Remark~\ref{r:algorithm} supports an affirmitive answer to the following problem.  

\begin{prob}\label{pr:simplicial}
Is the collection $\mathcal G = \mathcal P(\{1, \ldots, (w-1)\})$ of all sets $G$ satisfying
$$\mathcal L(\<a + id : 0 \le i \le w, i \notin G\>) = \mathcal L(S)$$
closed under taking subsets?  In particular, does $G' \subset G$ and $G \in \mathcal G$ imply $G' \in \mathcal G$?  
\end{prob}

\begin{example}\label{e:algorithm}
In cases where Problem~\ref{pr:simplicial} has a positive answer, the collection $\mathcal G$ can be recorded in a relatively concise manner, as it suffices to list either (i) only the elements of $\mathcal G$ that are maximal with respect to containment, or (ii) only the minimal sets that lie outside of $\mathcal G$.  This is effectively viewing $\mathcal G$ as an abstract simplicial complex; see \cite[Chapter~2]{stanleycca} for more background on this.  

We include several examples, each computed using a \texttt{Sage} \cite{sage} implementation of the algorithm in Remark~\ref{r:algorithm} that internally uses the \texttt{GAP} package \texttt{numericalsgps} \cite{numericalsgpsgap}.  
\begin{enumerate}[(a)]
\item 
If $S = \<11, 12, \ldots, 18\>$, then $\{3,4,5\}$ is the only set outside of $\mathcal G \subset \mathcal P(\{2, \ldots, 5\})$.  

\item 
If $S = \<23, 26, \ldots, 56\>$, then $\mathcal G \subset \mathcal P(\{2, \ldots, 9\})$ has 7 maximal sets, but
$$\{2, 3, 4, 5, 6, 7, 8\} \qquad \text{ and } \qquad \{3, 4, 5, 6, 7, 8, 9\}$$
are the only minimal sets outside of $\mathcal G$.  

\item 
If $S = \<51, 53, \ldots, 67\>$, then $\mathcal G = \mathcal P(\{2, \ldots, 6\})$, a consequence of Corollary~\ref{c:missingalllen}.  

\end{enumerate}
\end{example}

\begin{cor}\label{c:missingalllen}
If $a \ge w^2 - 3w$, then $\mathcal L(S) = \mathcal L(S')$ for any monoid $S'$ obtained from~$S$ by omitting a subset of the generators $a + 2d, \ldots, a + (w-2)d$.  
\end{cor}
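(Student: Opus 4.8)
The plan is to reduce to the finite comparison supplied by Remark~\ref{r:algorithm} and then to classify the relevant length sets explicitly using the hypothesis $a \ge w^2 - 3w$. Write $S' = \<n_i : 0 \le i \le w,\ i \notin G\>$ for the given $G \subseteq \{2, \ldots, w-2\}$, so that $S_* \subseteq S' \subseteq S$ and hence $\mathsf L_{S'}(n) \subseteq \mathsf L_S(n)$ for every $n$. By Remark~\ref{r:algorithm} it suffices to prove the equality of the finite collections
$$\mathcal A := \{\mathsf L_S(n) : n \in S,\ \mathsf m_S(n) \le w-3\} = \{\mathsf L_{S'}(n) : n \in S',\ \mathsf m_{S'}(n) \le w-3\} =: \mathcal A'.$$

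For the forward description of $\mathcal A$, I would fix $n \in S$ with $\mathsf m_S(n) \le w-3$ and write $n = c_1 a + c_2 d$ with $0 \le c_2 < a$. Since every generator is at most $n_w = a + wd$, a minimal-length factorization gives $n \le (w-3)(a + wd)$, whence $\mathsf M_S(n) = c_1 \le n/a \le (w-3) + (w-3)wd/a \le w - 3 + d$, using $a \ge w(w-3)$ in the final step. Feeding $c_1 \le w-3+d$ and $a + wd \ge w(w-3+d)$ into the cardinality formula of Lemma~\ref{l:omidali} then forces $|\mathsf L_S(n)| \le 2$; and tracking the equality cases shows that $|\mathsf L_S(n)| = 2$ can occur only when $a = w^2 - 3w$, $c_2 = 0$, and $c_1 = w-3+d$, in which case $\mathsf L_S(n) = \{w-3,\ w-3+d\}$. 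Since a singleton length set with minimal length $\le w-3$ is of the form $\{m\}$ with $0 \le m \le w-3$, this yields
$$\mathcal A = \{\{m\} : 0 \le m \le w-3\},$$
with the single pair $\{w-3,\ w-3+d\}$ adjoined precisely when $a = w^2 - 3w$.

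It then remains to match $\mathcal A'$ to this explicit list, and the point is that each length set in $\mathcal A$ is witnessed by an element whose extreme factorizations use only the retained generators $n_0$ and $n_w$. Concretely, each $\{m\}$ with $m \le w-3$ is realized by $m\,n_0 = ma$, for which $\mathsf L_{S'}(ma) \subseteq \mathsf L_S(ma) = \{m\}$ forces equality; and in the boundary case the pair is realized by $n = (w-3+d)a$, whose length-$(w-3+d)$ factorization is $(w-3+d)\,n_0$ and whose length-$(w-3)$ factorization is $(w-3)\,n_w$ (both valid since $a = w(w-3)$), so $\mathsf L_{S'}(n) = \{w-3,\ w-3+d\}$ as well. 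The reverse inclusion $\mathcal A' \subseteq \mathcal A$ follows from $\mathsf L_{S'}(n) \subseteq \mathsf L_S(n)$ together with the classification: if $n \in S'$ has $\mathsf m_{S'}(n) \le w-3$, then $\mathsf m_S(n) \le w-3$, so $\mathsf L_S(n)$ is one of the listed sets, and the only nonempty subsets of those sets that contain a length $\le w-3$ are themselves already in $\mathcal A$.

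I expect the main obstacle to be the inequality chain bounding $\mathsf M_S(n)$ and $|\mathsf L_S(n)|$, since this is exactly where $a \ge w^2 - 3w$ is indispensable: it must simultaneously cap $\mathsf M_S(n)$ at $w-3+d$ and pin $|\mathsf L_S(n)| \le 2$, collapsing every small-minimal-length length set to a singleton outside the single boundary value $a = w^2 - 3w$. The one remaining subtlety is the boundary pair, which is defused by the observation that its minimum- and maximum-length factorizations happen to use only $n_0$ and $n_w$, and therefore survive into every admissible $S'$.
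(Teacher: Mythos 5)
Your proof is correct, and its skeleton is the same as the paper's: reduce to the finitely many elements with small minimal factorization length, bound $|\mathsf L_S(n)|$ there via the cardinality formula of Lemma~\ref{l:omidali}\eqref{l:omidali:len}, and realize the surviving length sets by elements of the form $ma$, whose factorizations involve only retained generators. The genuinely interesting difference is at the boundary $a = w^2 - 3w$. The paper's proof claims $|\mathsf L_S(n)| \le 1$ for every $n \le (w-3)(a+wd)$, but its inequality chain only gives $(c_1w - c_2)/(a+wd) \le 1$, hence $|\mathsf L_S(n)| \le 2$; when $a = w^2-3w$ equality is attained at the single element $n = (w-3)(a+wd) = (w-3+d)a$, whose length set is the pair $\{w-3,\, w-3+d\}$ (e.g., $S = \langle 10,11,\ldots,15\rangle$ and $n = 30$, with $30 = 15+15 = 10+10+10$), so the paper's claim is literally false there. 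You correctly track this equality case and defuse it by noting that the two extreme factorizations $(w-3+d)\,n_0$ and $(w-3)\,n_w$ use only $n_0$ and $n_w$, so the pair persists in every intermediate $S'$. In this respect your argument repairs a small oversight in the paper's own proof (which is otherwise salvageable by the very observation you make).

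One caveat: you invoke the ``if and only if'' of Remark~\ref{r:algorithm} as a black box, whereas the paper argues directly from Theorem~\ref{t:algorithm}; the backward implication of that remark is itself only sketched in the paper, and the delicate case is exactly an $n \in S'$ with $\mathsf m_{S'}(n) \ge w-2$ but $\mathsf m_S(n) \le w-3$, where $\mathsf L_{S'}(n)$ is a truncation of $\mathsf L_S(n)$ not obviously lying in $\mathcal L(S)$. Your own classification closes this: such an $\mathsf L_S(n)$ is either a singleton (forcing $\mathsf m_{S'}(n) = \mathsf m_S(n)$, a contradiction) or the boundary pair (where $w-3 \in \mathsf L_{S'}(n)$ via $(w-3)\,n_w$, again a contradiction), so the case never arises under your hypotheses. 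It would strengthen the write-up to say this explicitly, or to route the reduction through Theorem~\ref{t:algorithm} as the paper does.
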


\begin{proof}
By Theorem~\ref{t:algorithm}, it suffice to consider $n \le (w - 3)(a + wd)$.  First, we claim $|L_S(n)| \le 1$.  Indeed, write $n =  c_1a + c_2d$ for $c_1, c_2 \in \ZZ$ and $0 \le c_2 < a$.  
Since
$$c_1 = \mathsf M_S(n) \le \frac{(w - 3)(a + wd)}{a}$$
and $a \ge w^2 - 3w$, we have
\begin{align*}
\frac{c_1w - c_2}{a + wd} \le \frac{c_1w}{a + wd} \le \frac{(w - 3)w}{a} \le 1.
\end{align*}
The claim now follows from Lemma~\ref{l:omidali}\eqref{l:omidali:len}.  

At this point, all we have left to show is $\mathsf L_S(n) \in \mathcal L(S_*)$.  By Lemma~\ref{l:omidali}\eqref{l:omidali:len}, 
$$\mathsf L_{S_*}(c_1a) \subset \mathsf L_S(c_1a) \subset \mathsf L_S(c_1a + c_2d) = \mathsf L_S(n),$$
all of which equal $\{c_1\}$ by the above claim.  This completes the proof.  
\end{proof}

\begin{cor}\label{c:missingallfrob}
If $a > w^2 - 3w + 1$, then $F(S) = F(S')$ for any monoid $S'$ obtained from~$S$ by omitting a subset of the generators $a + 2d, \ldots, a + (w-2)d$.  
\end{cor}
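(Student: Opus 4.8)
The plan is to leverage Corollary~\ref{c:missingalllen}, which already establishes equality of the sets of length sets under the slightly weaker hypothesis $a \ge w^2 - 3w$, and to recall the standard fact that the Frobenius number is determined by the set of length sets together with the membership data it encodes. The key observation is that $F(S)$ is the largest integer not in $S$, and membership $n \in S$ is equivalent to $\mathsf L_S(n) \neq \nothing$ (equivalently, to $\mathsf L_S(n)$ being a nonempty element of $\mathcal L(S)$ as opposed to the empty length set). So if $S$ and $S'$ assigned the same length set to every integer $n$, we would be done immediately; the subtlety is that Corollary~\ref{c:missingalllen} gives equality of the \emph{collection} $\mathcal L(S) = \mathcal L(S')$, not pointwise equality of $\mathsf L_S(n)$ and $\mathsf L_{S'}(n)$ for each $n$.

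To bridge this gap I would revisit Theorem~\ref{t:algorithm}, whose proof actually establishes the pointwise statement: for $n > (w-3)(a + wd)$, the conditions $n \in S_*$, $n \in S$, and $L_S(n) = L_{S'}(n)$ are all equivalent (the argument there works verbatim for any intermediate $S'$, since it only substitutes the relations $2(a+rd) = (a+(r-1)d)+(a+(r+1)d)$ using the surviving end generators $n_0, n_1, n_{w-1}, n_w$). In particular, for every such large $n$ we have $n \in S \iff n \in S'$, so $S$ and $S'$ contain exactly the same integers above the threshold $(w-3)(a+wd)$. Since $F(S)$ and $F(S')$ are each the largest integer omitted, and since clearly $S' \subseteq S$ forces $F(S') \ge F(S)$, it remains only to rule out the possibility that $S'$ omits some integer above $F(S)$ that $S$ contains.

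First I would fix the threshold $N = (w-3)(a+wd)$ and note that by the pointwise form of Theorem~\ref{t:algorithm}, $S \cap \ZZ_{> N} = S' \cap \ZZ_{> N}$; hence any integer distinguishing $S$ from $S'$ must lie in $\{0, 1, \ldots, N\}$, and in particular $F(S), F(S') \le N$. Next I would bound $F(S)$ from above using the classical arithmetical formula, or more simply observe that under $a > w^2 - 3w + 1$ the hypothesis of Corollary~\ref{c:missingalllen} holds, so $\mathcal L(S) = \mathcal L(S')$; combined with $S' \subseteq S$ this already forces $F(S') = F(S)$ once one checks that the largest gap occurs at an integer $n$ for which the length-set data coincides. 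The cleanest route, which I would adopt, is purely pointwise: it suffices to show that for all $n$ with $F(S) < n \le N$ we have $n \in S'$ whenever $n \in S$, and this follows because every such $n$ has $\mathsf m_S(n) \ge w - 2$ (a consequence of $n$ being large relative to $a$ under the strict bound $a > w^2 - 3w + 1$), which is precisely the hypothesis that drives the substitution argument in the proof of Theorem~\ref{t:algorithm} to place $n$ in $S'$.

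The main obstacle I anticipate is the boundary bookkeeping: Corollary~\ref{c:missingalllen} uses $a \ge w^2 - 3w$ whereas this corollary asserts the strict inequality $a > w^2 - 3w + 1$, so the two statements are not identical and I must verify that the extra integer of slack is exactly what is needed to push $F(S)$ strictly below the range where $S$ and $S'$ could disagree. Concretely, the delicate step is showing that no gap of $S$ lying in $\{0,\ldots,N\}$ is filled in $S$ but not in $S'$; I expect to handle this by checking that for integers $n$ in the relevant window the minimum length $\mathsf m_S(n)$ is large enough (at least $w-2$) to guarantee the end-generator substitutions succeed, using the strict bound to control the arithmetic. Everything else reduces to the definition $F(S) = \max(\ZZ_{\ge 0} \minus S)$ together with the inclusion $S' \subseteq S$, both of which are routine.
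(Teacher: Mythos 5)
Your proposal assembles the right ingredients (the pointwise form of Theorem~\ref{t:algorithm}, the inclusion $S_* \subseteq S' \subseteq S$, and the reduction to comparing gaps), but it contains a false intermediate claim and omits the one step that actually makes the corollary true. Writing $N = (w-3)(a+wd)$, you assert that since $S$ and $S'$ agree above $N$, ``in particular $F(S), F(S') \le N$.'' This does not follow and is in fact false: agreement of $S$ and $S'$ above $N$ only locates the difference set $S \setminus S'$ inside $[0,N]$; it says nothing about where the gaps of $S$ lie. Under the hypothesis $a > w^2 - 3w + 1$ the truth is the opposite, $F(S) > N$, and this is exactly the crux of the paper's proof, obtained from the classical formula for arithmetical monoids:
$$F(S) = \bigg\lceil \frac{a - 1}{w} \bigg\rceil a - a + (a - 1)d \ge (w - 3)a + (a - 1)d > (w - 3)(a + wd).$$
Once $F(S) > N$ is known, the conclusion is immediate: every $n > F(S)$ lies in $S$ and exceeds $N$, hence lies in $S'$ by Theorem~\ref{t:algorithm} (combined with $S_* \subseteq S'$), while $F(S) \notin S \supseteq S'$; therefore $F(S') = F(S)$. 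Note that the bound on $F(S)$ needed here is a \emph{lower} bound, not the upper bound you propose to extract from ``the classical arithmetical formula'' --- a sign that the logic of your argument is running in the wrong direction.

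Your fallback arguments do not close this hole. The window $\{n : F(S) < n \le N\}$ that you propose to handle is empty precisely because $F(S) > N$ --- but you never prove this, and your claimed substitute, that every $n$ in that window has $\mathsf m_S(n) \ge w-2$, is unjustified: the inequality $\mathsf m_S(n) \ge n/(a+wd) > w-3$ driving the proof of Theorem~\ref{t:algorithm} is available only for $n > N$, and for $n \le N$ nothing of the sort holds in general (the claim is true only vacuously, which you cannot know without first bounding $F(S)$ from below). Likewise, the suggestion that $\mathcal L(S) = \mathcal L(S')$ from Corollary~\ref{c:missingalllen} ``forces'' $F(S') = F(S)$ is not a proof: equality of the collections of length sets carries no pointwise membership information, which is the very gap you yourself identify at the start of your proposal. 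The missing idea, then, is the explicit lower bound $F(S) > (w-3)(a+wd)$ coming from the Frobenius number formula; with it, your argument collapses to the paper's one-line proof, and without it, your argument does not go through.
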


\begin{proof}
It suffices to show $F(S) = F(S_*)$.  If $a > w^2 - 3w + 1$, then by \cite[Theorem~3.3.2]{diophantinefrob}, 
$$F(S) = \bigg\lceil \frac{a - 1}{w} \bigg\rceil a - a + (a - 1)d \ge (w - 3)a + (a - 1)d > (w - 3)(a + wd),$$
after which $S$ and $S_*$ agree by Theorem~\ref{t:algorithm}.  As such, $F(S) = F(S_*)$.  
\end{proof}

\begin{remark}\label{r:missingallfrob}
Based on experiments, the bound $a > w^2 - 3w + 1$ given in Corollary~\ref{c:missingallfrob} is near tight for every $w \ge 6$.  This was tested using the \texttt{GAP} package \texttt{numericalsgps}~\cite{numericalsgpsgap} for $6 \le w \le 25$ and $d \le 10$; in well over half of the cases, the largest value of~$a$ for which $F(S) \ne F(S_*)$ was exactly $w^2 - 3w + 1$, and in every test case run, it~was at most 3 less than this value.  
\end{remark}

\begin{remark}\label{r:shifted}
There has been a recent interest in ``shifted'' numerical monoids, namely those of the form $\<n, n + r_1, \ldots, n + r_w\>$.  Asymptotic behavior (i.e.\ for $n > r_w^2$) of the delta set and Betti numbers \cite{shiftydelta,shiftyminpres} (both of which are determined by the set of length sets) and the Frobenius number \cite{shiftyapery} have been characterized.  Corollaries~\ref{c:missingalllen} and~\ref{c:missingallfrob} can be seen as special cases of these results, but with a key advantage:\ our lower bounds on $a$ do not depend on $d$.  This is particularly surprising when one considers how critical the size of $r_w$ is when working with general shifted numerical monoids.  Among other things, this yields a significantly stronger bound than those given in the aforementioned papers whenever $d \ge 2$.  
\end{remark}

\section{Omitting a single generator}
\label{sec:miss1gen}

\begin{notation}\label{n:sr}
Throughout this section, fix $a, d, w, r \in \ZZ_{\ge 1}$ with $\gcd(a, d) = 1$ and $1 \leq r \leq (w-1)$.  Write $n_i = a + id$ for $0 \le i \le w$, and let 
$$S = \<n_0, \ldots, n_w\> \qquad \text{ and } \qquad S_r = \<n_0, \dots, \widehat{n_r}, \dots, n_w\>.$$
In particular, $S_r$ is the monoid obtained from $S$ by omitting~$n_r$ as a generator.  
\end{notation}

In this section, we completely answer Question~\ref{q:mainquestion} for the monoids $S_r$ (Corollary~\ref{c:miss1genlen} and Theorem~\ref{t:miss1genfrob}).  In the process, we expand Lemma~\ref{l:omidali}\eqref{l:omidali:membership} to give a membership criterion for $S_r$ (Proposition~\ref{p:membership}) and we completely parametrize the set of length sets of $S_r$ (Theorem~\ref{t:miss1genlen}).  

\begin{prop}\label{p:membership}
Fix $c_1, c_2 \in \ZZ$ with $0 \le c_2 < a$, and let $n = c_1a + c_2d$.  
\begin{enumerate}[(a)]
\item 
\label{p:membership:middle}
If $1 < r < w - 1$, then $S \setminus S_r = \{n_r\}$.

\item 
\label{p:membership:firstgen}
$n \in S \setminus S_1$ if and only if $c_2 = 1$ and $wc_1 \le n_w$.  

\item 
\label{p:membership:lastgen}
$n \in S \setminus S_{w-1}$ if and only if $c_2 = c_1w - 1$ and $wc_1 \le n_0$.  

\end{enumerate}
\end{prop}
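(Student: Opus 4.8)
The plan is to treat part (a) by the explicit binomial relations recalled in the introduction, and parts (b) and (c) by analyzing the possible factorization lengths of $n$ via Lemma~\ref{l:omidali}. The unifying observation is that a length-$\ell$ factorization $n = \sum_i z_i n_i$ records the data $\ell = \sum_i z_i$ and $q = \sum_i i z_i$, so that $n = \ell a + q d$; conversely, for fixed $\ell$ the realizable values of $q$ are exactly the integers expressible as a sum of $\ell$ elements of $\{0, 1, \ldots, w\}$, and omitting the generator $n_r$ corresponds to forbidding the index $r$ from this sum. Since $n = c_1 a + c_2 d$ with $0 \le c_2 < a$, matching $\ell a + q d = c_1 a + c_2 d$ forces $\ell = c_1 - kd$ and $q = c_2 + ka$ for some integer $k \ge 0$, and Lemma~\ref{l:omidali}\eqref{l:omidali:len} tells us these lengths are valid precisely for $0 \le k \le \lfloor (c_1 w - c_2)/(a + wd) \rfloor$.

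For part (a), I would first note $n_r \notin S_r$: by Lemma~\ref{l:omidali}\eqref{l:omidali:len} the element $n_r = a + rd$ has $|\mathsf L_S(n_r)| = 1$, so its only factorization is $n_r$ itself, which uses the omitted generator. For the reverse inclusion, given $n \in S$ with $n \ne n_r$, I would take any factorization and repeatedly apply $2n_r = n_{r-1} + n_{r+1}$ to reduce the multiplicity of $n_r$ to at most one, then eliminate a final copy using one of $n_r + n_s = n_{r-1} + n_{s+1}$ or $n_r + n_s = n_{r+1} + n_{s-1}$, where $n_s$ is any other generator appearing (such an $n_s$ exists since $n \ne n_r$, possibly after a reduction step introduces $n_{r\pm 1}$). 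The only point requiring care is that at least one of these two substitutions stays inside $S_r$: the first is legal unless $s = w$ or $s = r - 1$, and the second unless $s = 0$ or $s = r + 1$, and a short case check shows that for $2 \le r \le w - 2$ these obstructions cannot occur simultaneously.

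For parts (b) and (c), the key combinatorial lemma is a description of the realizable $q$-values once an index is forbidden. When $r = 1$, the smallest available positive index is $2$, so a sum of $\ell$ elements of $\{0, 2, 3, \ldots, w\}$ realizes every value in $\{0\} \cup \{2, \ldots, w\ell\}$ but never $q = 1$; thus a length-$\ell$ factorization avoids $n_1$ exactly when its $q$-value is not $1$. Hence $n \in S \setminus S_1$ iff every valid length has $q = 1$; since distinct values of $k$ give distinct $q = c_2 + ka$ and $q = 1$ forces $k = 0$ and $c_2 = 1$, this happens iff $c_2 = 1$ and there is a unique valid length, i.e.\ $|\mathsf L_S(n)| = 1$, which by Lemma~\ref{l:omidali}\eqref{l:omidali:len} is equivalent to $wc_1 \le n_w$. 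Part (c) follows by the reflection $i \mapsto w - i$, which carries $\{0, 2, \ldots, w\}$ to $\{0, \ldots, w-2, w\}$ and a length-$\ell$ sum $q$ to $w\ell - q$; the forbidden value $q = 1$ becomes $q = w\ell - 1$, and the same analysis gives $n \in S \setminus S_{w-1}$ iff $c_2 = wc_1 - 1$. Here the single-length condition is automatic (it is forced once $c_2 = wc_1 - 1$), and $0 \le c_2 < a$ then yields $wc_1 \le n_0$.

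The main obstacle I anticipate is the realizability claim underpinning (b) and (c): verifying that, with a single interior or boundary index removed, the only unreachable $q$-value in $[0, w\ell]$ is the expected one ($1$, respectively $w\ell - 1$), and that for the middle case $2 \le r \le w - 2$ with $\ell \ge 2$ no value is lost at all. This is an elementary but slightly fiddly counting argument; phrasing it once and invoking the reflection symmetry to transfer between (b) and (c) keeps the casework manageable.
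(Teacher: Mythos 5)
Your proposal is correct, and for parts (b) and (c) it takes a genuinely different route from the paper's. Part (a) is essentially identical in both: the paper's proof is exactly the list of trading identities you describe, and your case check that the two trades $n_r + n_s = n_{r-1} + n_{s+1}$ and $n_r + n_s = n_{r+1} + n_{s-1}$ cannot be simultaneously obstructed when $1 < r < w-1$ is the content of its cases (i)--(v). For (b), however, the paper continues with trades: it shows $n_1 + n_k \in S_1$ for all $k \ge 1$, so every element of $S \setminus S_1$ must have all factorizations of the form $n_1 + kn_0$, and then it tests those elements via the rewriting $ka + d = (k-d)a + (a+1)d$ and Lemma~\ref{l:omidali}\eqref{l:omidali:len}; part (c) is dismissed as ``analogous.'' You instead reduce both (b) and (c) to a single counting lemma --- with index $1$ (resp.\ $w-1$) forbidden, the realizable index-sums of $\ell$ summands are all of $[0,w\ell]$ except exactly $1$ (resp.\ $w\ell - 1$) --- plus the reflection $i \mapsto w-i$. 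This buys uniformity: (c) really is free once (b) is done, and your observation that in (c) the inequality $wc_1 \le n_0$ is automatic from $c_2 < a$ is a sharpening the paper does not make. The cost is the counting lemma you leave unproved; note that it is genuinely the crux in \emph{both} arguments, since the paper's appeal to Lemma~\ref{l:omidali}\eqref{l:omidali:len} only produces a length-$(k-d)$ factorization in $S$, and converting it into one avoiding $n_1$ silently reuses the trading identities (or your counting argument). One caveat to add: your realizability claim requires $w \ge 3$, since for $w = 2$ sums of elements of $\{0,2\}$ are even; indeed the proposition itself fails there (for $S = \<5,6,7\>$, the element $13 = n_1 + n_2 \in S \setminus S_1$ has $c_2 = 3$), a degenerate case the paper's proof also silently excludes. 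With that hypothesis and the greedy argument (write $q = mw + t$ and patch $t = 1$ using $(w-2)+3$, or $2+2$ when $w = 3$), your plan goes through.
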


\begin{proof}
For part~(a), notice that (i)~$2n_r = n_{r-1} + n_{r+1}$, (ii)~$n_r + n_{r+1} = n_{r-1} + n_{r+2}$, (iii)~$n_{r-1} + n_r = n_{r-2} + n_{r+1}$, and for any $k \ge 2$, (iv)~$n_r + n_{r+k} = n_{r+1} + n_{r+k-1}$ and (v)~$n_r + n_{r-k} = n_{r-1} + n_{r-k+1}$.  As such, $S \setminus S_r = \{n_r\}$.  

For part~(b), a similar argument shows that $n_1 + n_k \in S_1$ for any $k \ge 1$.  This leaves elements of the form $n_1 + kn_0 = ka + d$, which by Lemma~\ref{l:omidali}\eqref{l:omidali:len} lie in $S_1$ whenever we can write $ka + d = (k-d)a + (a+1)d$ so that both $k-d \ge 0$ and $(k-d)w \ge a+1$.  The latter inequality is stictly more restrictive, so part~(b) is proved.  

Part~(c) follows by an analogous argument to part~(b), so the proof is complete.  
\end{proof}

\begin{thm}\label{t:miss1genlen}
Fix $c_1, c_2 \in \ZZ$ with $0 \le c_2 < a$, let $n = c_1a + c_2d$, and let 
$$K = \frac{c_1w - c_2}{a + wd}.$$
\begin{enumerate}[(a)]
\item 
If $1 < r < w - 1$, then $\mathsf L_{S_r}(n) = \mathsf L_S(n) = \{c_1 - kd : 0 \le k \le K\}$.  

\item 
$\ell \in \mathsf L_S(n) \setminus \mathsf L_{S_1}(n)$ if and only if $n \equiv d \bmod a$ and $\ell = \max \mathsf L_S(n)$.  

\item 
$\ell \in \mathsf L_S(n) \setminus \mathsf L_{S_{w-1}}(n)$ if and only if $n \equiv -d \bmod (a + wd)$ and $\ell = \min \mathsf L_S(n)$.  

\end{enumerate}
\end{thm}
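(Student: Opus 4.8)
The plan is to reduce all three parts to a single bookkeeping device. To each factorization $n = \sum_{i=0}^w z_i n_i$ I attach its length $\ell = \sum_i z_i$ and its \emph{weight} $c = \sum_i i z_i$, so that $n = \ell a + cd$ and, crucially, $c = (n-\ell a)/d$ is \emph{determined} by $\ell$. Writing $n = c_1a + c_2d$ with $0 \le c_2 < a$, the candidate length $\ell = c_1 - kd$ carries forced weight $c = c_2 + ka$, and Lemma~\ref{l:omidali}\eqref{l:omidali:len} says $\ell \in \mathsf L_S(n)$ exactly when $0 \le c \le \ell w$, i.e. $0 \le k \le K$. This recovers the description $\mathsf L_S(n) = \{c_1 - kd : 0 \le k \le K\}$ in part~(a) and reduces each part to one question: \emph{for a fixed length $\ell$, which weights $c \in [0,\ell w]$ are realizable using only the generators retained in $S_r$?}

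For part~(a) ($1 < r < w-1$), the inclusion $\mathsf L_{S_r}(n) \subseteq \mathsf L_S(n)$ is immediate, and for the reverse I would take any $S$-factorization of length $\ell$ and eliminate every occurrence of $n_r$ using the length-preserving relations (i)--(v) of Proposition~\ref{p:membership}\eqref{p:membership:middle}. Each relation removes at least one $n_r$, and because $2 \le r \le w-2$ all indices stay in $[0,w]$; a partner for $n_r$ exists precisely when the length is $\ge 2$, i.e. when $n \ne n_r$. Hence for $n \ne n_r$ every length of $S$ is realized in $S_r$, giving equality. I would flag the lone exception $n = n_r$, where $\mathsf L_{S_r}(n) = \varnothing$; this does not disturb $\mathcal L(S_r)$, since the length set $\{1\}$ still occurs (e.g.\ at $n_0$).

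Parts~(b) and~(c) hinge on the fact that deleting an \emph{end} generator permanently blocks one weight at \emph{every} length. In $S_1$ the surviving indices are $\{0,2,3,\dots,w\}$, whose positive members are all $\ge 2$, so weight $c=1$ is unrealizable at any length while every other $c \in [0,\ell w]$ remains realizable; I would verify the latter by the elementary fact that, for $w \ge 3$, each $c$ with $2 \le c \le \ell w$ is a sum of at most $\ell$ integers drawn from $[2,w]$ (pad with zeros to length $\ell$). Imposing the forced weight $c = c_2 + ka = 1$ forces $k=0$ and $c_2 = 1$, i.e. $n \equiv d \pmod a$ and $\ell = c_1 = \max \mathsf L_S(n)$, which is part~(b). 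Part~(c) is the mirror image: passing to the \emph{reverse weight} $c' = w\ell - c = \sum_i (w-i)z_i$ turns the index set $\{0,1,\dots,w-2,w\}$ of $S_{w-1}$ into $\{0,2,\dots,w\}$, so $c'$ is realizable iff $c' \ne 1$, i.e.\ $c \ne w\ell - 1$. Setting $c = w\ell - 1$ yields $k(a+wd) = wc_1 - c_2 - 1$; using $a \equiv -wd \pmod{a+wd}$ this divisibility is exactly $n \equiv -d \pmod{a+wd}$, and the resulting integer $k$ satisfies $\lfloor K \rfloor = k$ (since $K = k + 1/(a+wd)$), so $\ell = c_1 - kd = \min \mathsf L_S(n)$.

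I expect the main obstacle to be the weight-realizability lemma behind parts~(b) and~(c): proving that the \emph{only} weight lost when an end generator is removed is $1$ (respectively $w\ell-1$), rather than a larger progression-like family of gaps. This is where the hypothesis $w \ge 3$ genuinely enters — for $w = 2$ the set $\{0,2,\dots,w\} = \{0,2\}$ realizes only even weights and the stated characterization fails — so I would isolate this counting step and record the standing assumption. The remaining work (checking that the boundary cases $c_1 = 0$ and $wc_1 - c_2 \le 1$ do not spuriously satisfy the congruences, and that each congruence indeed forces $k \ge 0$) is routine once the realizability lemma is in hand.
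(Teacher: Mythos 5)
Your proof is correct, and it takes a genuinely different route from the paper's. The paper works with one explicit factorization per length: writing $n - \ell a = pd$ and $p = qw + r_0$ with $0 \le r_0 < w$, it takes the canonical factorization $n = (\ell - 1 - q)n_0 + n_{r_0} + q\,n_w$ and, when $r_0 = r$, repairs it with the trades $n_0 + n_r = n_1 + n_{r-1}$ (available when $r > 1$ and $q < \ell - 1$) or $n_r + n_w = n_{r+1} + n_{w-1}$ (available when $r < w-1$ and $q > 0$); the two irreparable cases $(r_0, q) = (1, 0)$ and $(r_0, q) = (w-1, \ell-1)$ are then shown directly to force $h_1 = 1$ (resp.\ $h_{w-1} = 1$) in every length-$\ell$ factorization --- which is exactly your statement that weight $1$ (resp.\ $w\ell - 1$) is realizable only through index $1$ (resp.\ $w-1$), in disguise. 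Your reformulation --- length forces the weight $c = \sum_i i z_i$, so everything reduces to which $c \in [0, \ell w]$ are sums of at most $\ell$ elements of the surviving index set --- is the cleaner abstraction: it unifies parts~(b) and~(c) via the reflection $i \mapsto w - i$ (where the paper instead repeats the argument ``analogously''), it makes explicit the hypothesis $w \ge 3$ that the paper leaves implicit (and which is genuinely needed: for $S = \<3,4,5\>$ one has $6 \in \mathsf L_S(23) \setminus \mathsf L_{S_1}(23)$ with $6 \ne \max \mathsf L_S(23)$ and $23 \not\equiv d \bmod a$), and your relation-elimination argument for part~(a) correctly isolates the exceptional element $n = n_r$, where $\mathsf L_{S_r}(n_r) = \varnothing \ne \{1\} = \mathsf L_S(n_r)$ --- an edge case the paper's statement and proof both gloss over, and which, as you note, is harmless for Corollary~\ref{c:miss1genlen}. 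The one step you flag as the main obstacle does hold and is a short greedy argument: given $2 \le c \le \ell w$, write $c = mw + s$ with $0 \le s < w$; if $s = 0$ or $s \ge 2$ this exhibits $c$ as a sum of at most $m+1 \le \ell$ elements of $[2,w]$, and if $s = 1$ (so $1 \le m \le \ell - 1$) replace one copy of $w$ by $(w-1) + 2$, which is where $w \ge 3$ enters; your reverse-weight reduction then settles part~(c) with no further counting. What the paper's concrete factorizations buy in exchange is reusability: the proof of Theorem~\ref{t:algorithm} explicitly recycles the canonical factorization constructed here (``as in the proof of Theorem~\ref{t:miss1genlen}\dots''), which your more abstract realizability formulation would not hand over as directly.
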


\begin{proof}
By Lemma~\ref{l:omidali}\eqref{l:omidali:len} and the definition of $S_r$, 
$$\mathsf L_{S_r}(n) \subset \mathsf L_S(n) = \{c_1 - kd : 0 \le k \le K\},$$
and in particular each $\ell \in \mathsf L_{S_r}(n)$ can be written as $\ell = c_1 - kd$ for some $k \in \ZZ_{\ge 0}$.  
In~order to bound $k$, consider the inequalities
$$\ell a \le n \le \ell (a + wd).$$
Combined with the fact that $n \equiv a\ell \bmod d$, we obtain $n - a\ell = pd$ for some $p \in \ZZ$.  
If~$n = (a + wd)\ell$, then this is the only factorization of $n$ in both $S$ and $S_r$.  Otherwise, writing $p = qw + r_0$ for $0 \leq r_0 < w$, we see that
$$n = \ell a + pd = (\ell - 1 - q)a + (a + r_0d) + q(a + wd),$$
is a factorization of $n$ of length $\ell$ in $S$, since 
$$0 \le n - \ell a < \ell (a + wd) - \ell a = \ell wd,$$
implies $0 \leq q \leq \ell - 1$.  
From here, it suffices to consider the case $r_0 = r$.  
If $r > 1$ and $q < \ell - 1$, we may adjust the factorization for $n$ above to obtain 
$$n = (\ell - 2 - q)a + (a + d) + (a + (r-1)d) + q(a + wd)$$
is a factorization of length $\ell$ in $S_r$.  On the other hand, if $r < w - 1$ and $q > 0$, then
$$n = (\ell - 1 - q)a + (a + (r+1)d) + (a + (w-1)d) + (q - 1)(a + wd)$$
is such a factorization.  This proves part~(a) and leaves the two cases below, comprising exactly the conditions for which $\ell \notin \mathsf L_{S_r}(n)$ specified in parts~(b) and~(c), respectively.  

First, suppose $r = 1$ and $q = 0$.  Any length $\ell$ factorization $n = h_0a + \cdots + h_w(a + wd)$ of $n$ in $S$ can be rearranged to give
$$\ell a + d = n = \ell a + d(h_1 + 2h_2 + \cdots),$$
which is only possible if $h_1 > 0$.  Moreover, this is precisely the case $n = (a + d) + (\ell - 1)a$, which is equivalent to $n \equiv d \mod a$ and $\ell = \max \mathsf L_S(n)$, thus proving part~(b).  

Second, suppose $r = w - 1$ and $q = \ell - 1$.  
Analogous to the first case, any length $\ell$ factorization $n = h_0a + \cdots + h_w(a + wd)$ of $n$ in $S$ can be rearranged as
$$\ell (a + wd) - d = n = \sum_{i = 0}^w h_{w-i}(a + (w-i)d) = \ell (a + wd) - d\bigg(\sum_{i = 0}^w h_{w-i}i\bigg)$$
which is only possible if $h_{w-1} = 1$.  
Additionally, this is precisely the case in which $n = (a + (w-1)d) + q(a + wd)$, which is equivalent to $n \equiv -d \bmod (a + wd)$ and $\ell = \min \mathsf L_S(n)$.  Hence, part~(c) is verified, and the proof is complete.
\end{proof}

\begin{cor}\label{c:miss1genlen}
$\mathcal L(S_r) = \mathcal L(S)$ if and only if $1 < r < w - 1$.
\end{cor}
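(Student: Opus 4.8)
The plan is to prove both implications using the complete description of $\mathsf L_{S_r}(n)$ furnished by Theorem~\ref{t:miss1genlen}. For the forward direction, suppose $1 < r < w-1$. Then Theorem~\ref{t:miss1genlen}(a) gives $\mathsf L_{S_r}(n) = \mathsf L_S(n)$ for every $n$, and Proposition~\ref{p:membership}\eqref{p:membership:middle} gives $S \setminus S_r = \{n_r\}$. Hence $\mathsf L_S(n) \in \mathcal L(S_r)$ for all $n \ne n_r$, while the one remaining length set $\mathsf L_S(n_r) = \{1\}$ is also in $\mathcal L(S_r)$, being the length set of $n_0 = a$ in $S_r$. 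The inclusions $\mathcal L(S) \subseteq \mathcal L(S_r)$ and $\mathcal L(S_r) \subseteq \mathcal L(S)$ follow at once, giving equality.

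For the converse I would argue the contrapositive, exhibiting in each of the cases $r = 1$ and $r = w-1$ a single length set lying in $\mathcal L(S_r)$ but not in $\mathcal L(S)$. The engine is the following observation, immediate from Lemma~\ref{l:omidali}\eqref{l:omidali:len}: if $\mathsf M_S(m) = a + (w-1)d$, then writing $m = (a+(w-1)d)a + c_2 d$ with $0 \le c_2 < a$, one computes $(a+(w-1)d)w - c_2 = (w-1)(a+wd) + (a - c_2)$, where $1 \le a - c_2 \le a < a + wd$, so $|\mathsf L_S(m)| = w$ for \emph{every} such $m$. Since a length set as described in Theorem~\ref{t:miss1genlen} is an arithmetic progression with common difference $d$, and hence is determined by its maximum together with its cardinality, this shows that $L^* = \{a + jd : 1 \le j \le w-1\}$, which has maximum $a + (w-1)d$ but cardinality only $w-1$, cannot equal $\mathsf L_S(m)$ for any $m \in S$; that is, $L^* \notin \mathcal L(S)$.

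It then remains to realize $L^*$ inside each $\mathcal L(S_r)$. For $r = 1$, I would take $n = (a+wd)a + d$: one checks $n \in S_1$ via Proposition~\ref{p:membership}\eqref{p:membership:firstgen}, and, since $n \equiv d \bmod a$, Theorem~\ref{t:miss1genlen}(b) removes exactly the maximum from $\mathsf L_S(n) = \{a + jd : 1 \le j \le w\}$, leaving $\mathsf L_{S_1}(n) = L^*$. For $r = w-1$, I would take $n = (a+(w-1)d)a + (a-1)d$: one checks $n \equiv -d \bmod (a+wd)$ and $n \in S_{w-1}$ via Proposition~\ref{p:membership}\eqref{p:membership:lastgen}, so Theorem~\ref{t:miss1genlen}(c) removes exactly the minimum from $\mathsf L_S(n) = \{a + jd : 0 \le j \le w-1\}$, again leaving $\mathsf L_{S_{w-1}}(n) = L^*$. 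In either case $L^* \in \mathcal L(S_r) \setminus \mathcal L(S)$, so $\mathcal L(S_r) \ne \mathcal L(S)$, completing the contrapositive.

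The main obstacle is locating the witness $L^*$. The truncated length sets produced by dropping $n_1$ or $n_{w-1}$ are generically still realized elsewhere in $\mathcal L(S)$, so a random choice of $n$ will not separate the two collections; the point is to find the one maximum value $a + (w-1)d$ at which the (at most two) cardinalities available to $\mathcal L(S)$ collapse to the single value $w$, so that a cardinality-$(w-1)$ set with that maximum is forced to be absent. By contrast, verifying that the two explicit elements above actually lie in $S_r$ and truncate to \emph{exactly} $L^*$ (losing no additional lengths) is a routine check using Proposition~\ref{p:membership} and Theorem~\ref{t:miss1genlen}.
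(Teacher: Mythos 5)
Your proof is correct, and its core mechanism is the same as the paper's: the forward direction is Theorem~\ref{t:miss1genlen}(a), and the converse is settled by a cardinality obstruction --- every length set of $S$ is an arithmetic progression of difference $d$ whose cardinality is forced by its maximum via Lemma~\ref{l:omidali}\eqref{l:omidali:len}, so a truncated length set of $S_r$ with the ``wrong'' cardinality for its maximum cannot lie in $\mathcal L(S)$. In fact your $r=1$ witness $n = (a+wd)a + d$ is precisely the instance $t = w$ of the family the paper uses (the paper fixes $n = c_1a + d \in S_1$ with $c_1w = t(a+wd)$, computes $\mathsf M_{S_1}(n) = c_1 - d$ and $|\mathsf L_{S_1}(n)| = t-1$, and shows every $n' \in S$ with $\mathsf M_S(n') = c_1 - d$ has $|\mathsf L_S(n')| \ge t$). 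That said, your write-up is more complete than the paper's in two respects. First, in the forward direction you explicitly account for the one element $n_r \in S \setminus S_r$, observing that $\mathsf L_S(n_r) = \{1\}$ is re-realized as $\mathsf L_{S_r}(a)$; the paper cites Theorem~\ref{t:miss1genlen}(a) wholesale, even though that statement read literally at $n = n_r$ would equate an empty length set with $\{1\}$. Second, and more substantively, the paper's converse only treats $r = 1$: the inequality $\mathcal L(S_{w-1}) \ne \mathcal L(S)$ is never argued in that proof (it follows only from the later Theorem~\ref{t:boundaries}, which the corollary's proof does not invoke). Your dual witness $n = (a+(w-1)d)a + (a-1)d = a(a+wd) - d$, verified via Proposition~\ref{p:membership}\eqref{p:membership:lastgen} and truncated via Theorem~\ref{t:miss1genlen}(c), fills exactly this gap, making your argument self-contained where the paper's is not.
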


\begin{proof}
If $1 < r < w - 1$, then $\mathcal L(S_r) = \mathcal L(S)$ by Theorem~\ref{t:miss1genlen}.  Conversely, fix $c_1 \in \ZZ_{\ge 0}$ with $n = c_1a + d \in S_1$, and suppose $c_1w = t(a + wd)$ for $t \in \ZZ$.  We claim $L_{S_1}(n) \notin \mathcal L(S)$.  

By Theorem~\ref{t:miss1genlen}, $\mathsf M_{S_1}(n) = c_1 - d$ and $|\mathsf L_{S_1}(n)| = t - 1$.  Fix $n' = c_1'a + c_2'd \in S$ with $0 \le c_2' < a$ so that $\mathsf M_S(n') = \mathsf M_{S_1}(n)$.  Again by Theorem~\ref{t:miss1genlen}, $\mathsf M_S(n') = c_1' = c_1 - d$ and 
$$|\mathsf L_S(n')| = \bigg\lfloor \frac{(c_1-d)w - c_2'}{a + wd} \bigg\rfloor + 1
= \bigg\lfloor \frac{-(c_2' + wd)}{a + wd} \bigg\rfloor  + t + 1 \ge t.$$
In particular, $|\mathsf L_S(n')| > |\mathsf L_{S_1}(n)|$, so the proof is complete.
\end{proof}

It turns out $\mathcal L(S_1)$ and $\mathcal L(S_{w-1})$, though distinct from $\mathcal L(S)$, are equal to one another.  Our proof of this curious fact uses Lemma~\ref{l:lastgenequiv}, which gives an equivalent version of the modular equation in Theorem~\ref{t:miss1genlen}(c).  

\begin{lemma}\label{l:lastgenequiv}
For $n = c_1a + c_2d$ with $0 \le c_2 < a$, the following are equivalent:
\begin{enumerate}[(a)]
\item 
$n \equiv -d \bmod (a + wd)$; and

\item 
$c_1w \equiv (c_2 + 1) \bmod (a + wd)$.  

\end{enumerate}
\end{lemma}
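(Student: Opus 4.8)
The plan is to prove the equivalence of the two congruences by a direct algebraic manipulation, treating $n = c_1a + c_2d$ and computing both sides modulo $a + wd$. The key observation is that $a \equiv -wd \pmod{a + wd}$, which lets me convert any expression involving $a$ into one involving only $d$ modulo $a + wd$.

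First I would rewrite condition (a). We have $n \equiv -d \pmod{a+wd}$, and substituting $n = c_1 a + c_2 d$ together with $a \equiv -wd$ gives $c_1(-wd) + c_2 d \equiv -d$, i.e.\ $-c_1 w d + c_2 d \equiv -d \pmod{a+wd}$. Adding $d$ to both sides and rearranging yields $d(c_1 w - c_2 - 1) \equiv 0 \pmod{a + wd}$, which is exactly $d(c_1 w) \equiv d(c_2 + 1) \pmod{a+wd}$. To pass from here to condition (b), namely $c_1 w \equiv c_2 + 1 \pmod{a+wd}$, I need to cancel the factor of $d$. The justification for cancellation is that $\gcd(d, a + wd) = \gcd(d, a) = 1$, which follows from the standing hypothesis $\gcd(a,d) = 1$; thus $d$ is invertible modulo $a + wd$ and may be divided out.

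The argument is reversible at every step: multiplying (b) through by $d$ gives $d\,c_1 w \equiv d(c_2+1)$, and then substituting $-wd \equiv a$ reconstructs $n \equiv -d$. So the plan is to present the chain of congruences as a sequence of if-and-only-if statements rather than proving the two implications separately.

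The only subtlety, and the step I would flag, is the cancellation of $d$: I must explicitly note that $\gcd(d, a+wd) = 1$ so that $d$ is a unit modulo $a + wd$. Everything else is a routine substitution using $a \equiv -wd \pmod{a+wd}$. I would write the whole proof as a short displayed chain of equivalences, being careful to keep it inside a single \texttt{align*} or \texttt{gather*} block with no blank lines, and to state the coprimality fact in prose immediately before invoking it.
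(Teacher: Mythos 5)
Your proof is correct, and it takes a somewhat different (and arguably cleaner) route than the paper's. You work entirely modulo $a+wd$: substituting $a \equiv -wd \pmod{a+wd}$ turns condition (a) into $d(c_1w - c_2 - 1) \equiv 0 \pmod{a+wd}$, and then you cancel $d$ using $\gcd(d, a+wd) = \gcd(d,a) = 1$; every step is an equivalence, so both implications come at once. The paper instead argues with explicit integer identities and proves the two directions separately: for (b) $\Rightarrow$ (a) it writes $c_1w - (c_2+1) = m(a+wd)$ and factors $n+d$ as an explicit multiple of $a+wd$; for (a) $\Rightarrow$ (b) it writes $n + d = M(a+wd)$, reduces both expressions for $n$ modulo $d$ and uses $\gcd(a,d)=1$ to conclude $M \equiv c_1 \pmod{d}$, writes $M = md + c_1$, and finally divides an integer identity through by $d$. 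The number-theoretic crux is the same in both arguments---$d$ is coprime to the modulus, so ``dividing by $d$'' is legitimate---but your version localizes this to a single unit-cancellation step in $\ZZ/(a+wd)\ZZ$, whereas the paper's version stays inside integer arithmetic at the cost of the extra bookkeeping needed to pin down $M$ modulo $d$. The one subtlety you flagged (justifying the cancellation of $d$) is exactly the point that needs justification, and your justification is correct.
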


\begin{proof}
Suppose $c_1w - (c_2 + 1) = m(a + wd)$ for $m \in \ZZ$.  Then 
$$n + d = c_2d + c_1a + d = c_2d - c_1wd + d + c_1(a + wd) = (md + c_1)(a + wd).$$
Conversely, suppose $n + d = M(a + wd)$ for $M \in \ZZ$.  Since $\gcd(a,d) = 1$, reducing both expressions for $n$ modulo $d$ yields $M \equiv c_1 \bmod d$, so writing $M = md + c_1$ for $m \in \ZZ$,
$$c_2d + d - c_1wd = c_2d + d + c_1a - c_1(a + wd) = n + d - c_1(a + wd) = md(a + wd),$$
at which point dividing by $d$ completes the proof.  
\end{proof}

\begin{thm}\label{t:boundaries}
$\mathcal L(S_1) = \mathcal L(S_{w-1})$.
\end{thm}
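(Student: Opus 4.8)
The plan is to work entirely at the level of arithmetic progressions. By Lemma~\ref{l:omidali}, every length set of $S$, $S_1$, or $S_{w-1}$ is an arithmetic progression with common difference $d$, hence is determined by the pair $(\mathsf m_S(n),\mathsf M_S(n))$, or equivalently by its maximum together with its cardinality. I would write $n=c_1a+c_2d$ with $0\le c_2<a$, so that $\mathsf M_S(n)=c_1$ and $|\mathsf L_S(n)|=\lfloor (c_1w-c_2)/(a+wd)\rfloor+1$. We may assume $w\ge 3$ (for $w\le 2$ one has $S_1=S_{w-1}$), whence $a>w\ge 3$.

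First I would prove a structural reduction. By Theorem~\ref{t:miss1genlen}(b), $\mathsf L_{S_1}(n)=\mathsf L_S(n)$ unless $c_2=1$, in which case the maximum is deleted; hence $\mathcal L(S_1)=\mathcal L(S)\cup C_1$, where $C_1=\{\mathsf L_S(n)\setminus\{\mathsf M_S(n)\}:n=c_1a+d,\ |\mathsf L_S(n)|\ge 2\}$ is the family of ``top-chopped'' progressions. The only point needing an argument is $\mathcal L(S)\subseteq\mathcal L(S_1)$: for fixed $c_1$, as $c_2$ runs over $[0,a)$ the quantity $(c_1w-c_2)/(a+wd)$ varies over an interval of length $<1$, so the cardinality takes only the value $\lfloor c_1w/(a+wd)\rfloor+1$ (attained at $c_2=0$) and one less (attained at $c_2=a-1$ when it occurs). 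Since $a\ge 3$, every length set of $S$ is realized by some $c_2\ne 1$ and therefore survives in $S_1$. A parallel argument, using Theorem~\ref{t:miss1genlen}(c) and parametrizing by the minimum length, gives $\mathcal L(S_{w-1})=\mathcal L(S)\cup C_{w-1}$ with $C_{w-1}=\{\mathsf L_S(n)\setminus\{\mathsf m_S(n)\}:n\equiv -d\ (\mathrm{mod}\ a+wd),\ |\mathsf L_S(n)|\ge 2\}$. It then suffices to prove $C_1\setminus\mathcal L(S)=C_{w-1}\setminus\mathcal L(S)$.

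For the inclusion $C_1\setminus\mathcal L(S)\subseteq C_{w-1}$ I would set up an explicit correspondence. Given $n=Ma+d$ with $\mathsf L_S(n)=\{m,m+d,\dots,M\}$ of size $\ge 2$, consider $n^{\ast}=(m-d)(a+wd)-d$. By Lemma~\ref{l:lastgenequiv} this lies in the residue class $\equiv -d\pmod{a+wd}$ governing the $S_{w-1}$-deletions, and a direct check gives $\mathsf m_S(n^{\ast})=m-d$. The claim is that when the top-chopped set $A=\{m,\dots,M-d\}$ is genuinely new (i.e.\ $A\notin\mathcal L(S)$) one has $\mathsf M_S(n^{\ast})=M-d$, so that $\mathsf L_S(n^{\ast})=\{m-d,\dots,M-d\}$ and deleting its minimum returns exactly $A$, placing $A\in C_{w-1}$. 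The reverse inclusion is the mirror statement, proved identically with the roles of $a$ and $a+wd$ interchanged.

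The hard part will be verifying $\mathsf M_S(n^{\ast})=M-d$ on the new sets. The upper bound $M\notin\mathsf L_S(n^{\ast})$ is free, since $\mathsf m_S(n)=m$ already forces $n>(m-d)(a+wd)$, i.e.\ $Ma>n^{\ast}$. The lower bound $(M-d)a\le n^{\ast}$ reduces, after substituting $M=m+|A|\,d$, to the single condition $s\ge wd$, where $s=(Mw-1)\bmod(a+wd)$. I would then run the floor-function bookkeeping in the other direction: computing $(M-d)w\bmod(a+wd)$ shows that $A$ fails to be new precisely when $s<a+wd-2$, so ``new'' already forces $s\ge a+wd-2\ge wd$ and the lower bound holds automatically. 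The same inequality forces $m-d\ge 1$ (via $m=(s+1+|A|\,a)/w$ together with $a>w$ and $|A|\ge 1$), so that $n^{\ast}$ is a genuine positive element of $S$ whose length set has the required size. Apart from this modular computation and the symmetric treatment of the reverse inclusion, everything is routine.
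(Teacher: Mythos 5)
Your argument is correct---I checked the key computations and they hold---but it is organized genuinely differently from the paper's proof. The paper never isolates $\mathcal L(S)$: it matches length sets element-by-element, showing for each $n$ in one of the two special residue classes that its length set in one monoid equals the length set, in the other monoid, of a nearby witness chosen among $n\pm d$, $n+2d$, $n+3d$ (resp.\ $n-2d$, $n-3d$), with the case split governed by $c_2\in\{a-1,a-2\}$ and $c_1w\equiv 0,-1 \pmod{a+wd}$. Your decomposition $\mathcal L(S_1)=\mathcal L(S)\cup C_1$ and $\mathcal L(S_{w-1})=\mathcal L(S)\cup C_{w-1}$, together with the newness criterion ($s\ge a+wd-2$, resp.\ $c_2\ge a-2$), explains that case split: the paper's generic cases are exactly your non-new ones, where the chopped progression is already a full length set of $S$ and hence common to both monoids, and in the new cases your witness $(m-d)(a+wd)-d$ coincides numerically with the paper's $n-2d$ or $n-3d$ (and the mirror witness $(\mathcal M+d)a+d$, with $\mathcal M$ the maximum of the original progression, coincides with $n+2d$ or $n+3d$). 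Two points deserve attention in a full write-up. First, your ``parallel argument'' for $\mathcal L(S)\subseteq\mathcal L(S_{w-1})$ needs slightly more care than the $S_1$ case: the bad residue is not fixed at $c_2=1$ but is the unique $c_2$ with $c_1w\equiv c_2+1\pmod{a+wd}$ (Lemma~\ref{l:lastgenequiv}), so you must check the edge cases $c_1w\equiv 1$ and $c_1w\equiv a\pmod{a+wd}$, where that residue could a priori be the sole realizer of its cardinality; in both, a one-line computation shows an adjacent residue gives the same cardinality, so the argument survives. Second, the two directions are not quite mirror images: the reverse inclusion $C_{w-1}\subseteq C_1$ in fact holds unconditionally, because $(\mathcal M+d)w-1\equiv c_2+wd\pmod{a+wd}$ with $0\le c_2+wd<a+wd$ always, whereas the forward direction genuinely needs newness (i.e.\ $s\ge wd$) for your $n^{\ast}$ to have maximum length $M-d$. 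What the paper's organization buys is uniformity---it never needs $\mathcal L(S)\subseteq\mathcal L(S_{w-1})$ or the newness dichotomy as separate statements; what yours buys is a sharper structural picture of exactly which length sets are created by omitting a generator, which also makes results like Corollary~\ref{c:miss1genlen} transparent.
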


\begin{proof}
By Theorem~\ref{t:miss1genlen}, equality of length sets can be verified by simply comparing their cardinality and maximum elements, and it suffices to verify $\mathsf L_{S_1}(n) \in \mathcal L(S_{w-1})$ and $\mathsf L_{S_{w-1}}(n) \in \mathcal L(S_1)$ for elements $n = c_1a + c_2d$ with $0 \le c_2 < a$ satisfying either $n \equiv d \bmod a$ or $n \equiv -d \bmod (a + wd)$.  Throughout the proof, we use the notation
$$[m] := \lfloor m/(a + wd) \rfloor.$$

We begin by showing $\mathcal L(S_{w-1}) \subset \mathcal L(S_1)$.  For $n = c_1a + d \in S_{w-1}$, Theorem~\ref{t:miss1genlen}(a) ensures $\mathsf M_{S_{w-1}}(n) = \mathsf M_{S_1}(n + d) = \mathsf M_{S_1}(n - d) = c_1$, and by Lemma~\ref{l:lastgenequiv}, 
$$|\mathsf L_{S_1}(n + d)|
= [c_1w - 2] + 1
= [c_1w - 1] + 1
= |\mathsf L_{S_{w-1}}(n)|$$
whenever $c_1w \not\equiv 1 \bmod (a + wd)$, and otherwise, 
$$|\mathsf L_{S_1}(n - d)|
= [c_1w] + 1
= [c_1w - 1] + 1
= |\mathsf L_{S_{w-1}}(n)|.$$
Next, suppose $n \equiv -d \bmod (a + wd)$.  We claim that $L_{S_{w-1}}(n)$ agrees with either $L_{S_1}(n + 2d)$ or $L_{S_1}(n + 3d)$.  We first show that $n + 2d, n + 3d \in S_1$.  By Proposition~\ref{p:membership}\eqref{p:membership:lastgen}, $wc_1 > a$, which implies $w(c_1 + d) > a + wd$.  This ensures 
$$n + 2d = (c_1 + d)a + d \qquad \text{ and } \qquad n + 3d = (c_1 + d)a + 2d$$
both lie in $S_1$ by Lemma~\ref{l:omidali}\eqref{l:omidali:membership} and Proposition~\ref{p:membership}\eqref{p:membership:firstgen}.  
Now, Lemma~\ref{l:lastgenequiv} implies $c_1w = m(a + wd) + (c_2 + 1)$ for $m \in \ZZ$.  By Theorem~\ref{t:miss1genlen}, 
$$\mathsf M_{S_{w-1}}(n) = c_1 \qquad \text{ and } \qquad |\mathsf L_{S_{w-1}}(n)|
= [c_1w - c_2]
= m.$$
If $c_2 = a - 1$, then $\mathsf M_{S_1}(n + 2d) = c_1 = \mathsf M_{S_{w-1}}(n)$ and 
$$|\mathsf L_{S_1}(n + 2d)|
= [(c_1 + d)w - 1]
= [m(a + wd) + a + wd - 1]
= m.$$
If $c_2 = a - 2$, then by similar reasoning $\mathsf L_{S_1}(n + 3d) = \mathsf L_{S_{w-1}}(n)$.  In all remaining cases, $\mathsf M_{S_1}(n + 2d) = c_1$ and
$$|\mathsf L_{S_1}(n + 2d)|
= [c_1w - (c_2 + 2)] + 1
= m + [-2] + 1
= m,$$
which completes the proof that $\mathcal L(S_{w-1}) \subset \mathcal L(S_1)$.  

It remains to show $\mathcal L(S_1) \subset \mathcal L(S_{w-1})$.  
Suppose $n \equiv -d \bmod (a + wd) \in S_1$, so that $(a + wd) \mid c_1w - c_2 - 1$ by Lemma~\ref{l:lastgenequiv}.  Based on whether or not $c_2 = a - 1$, either $n' = n + d$ or $n' = n - d$ will satisfy $\mathsf M_{S_{w-1}}(n') = c_1 = \mathsf M_{S_1}(n)$ and
$$|\mathsf L_{S_{w-1}}(n')|
= [c_1w - c_2 \pm 1] + 1
= [c_1w - c_2] + 1
= |\mathsf L_{S_1}(n)|.$$

Next, suppose $n = c_1a + d$.  We claim that $L_{S_1}(n)$ agrees with either $L_{S_{w-1}}(n - 2d)$ or $L_{S_{w-1}}(n - 3d)$.  We first show that $n - 2d, n - 3d \in S_{w-1}$.  By Proposition~\ref{p:membership}\eqref{p:membership:firstgen}, $c_1w > a + wd$, so in particular $c_1 > d$ and $a - 1 \le (c_1 - d)w$.  This means 
$$n - 3d = (c_1 - d)a + (a - 2)d
\qquad \text{ and } \qquad
n - 2d = (c_1 - d)a + (a - 1)d$$
both lie in $S_{w-1}$ by Lemma~\ref{l:omidali}\eqref{l:omidali:membership} and Proposition~\ref{p:membership}\eqref{p:membership:lastgen}.  
By Theorem~\ref{t:miss1genlen}, 
$$\mathsf M_{S_1}(n) = c_1 - d
\qquad \text{ and } \qquad
|\mathsf L_{S_1}(n)| = [c_1w - 1].$$
By Theorem~\ref{t:miss1genlen}, $\mathsf M_S(-)$ and $\mathsf M_{S_{w-1}}(-)$ coincide on elements of $S_{w-1}$, which means
$\mathsf M_{S_{w-1}}(n - 2d) = \mathsf M_{S_{w-1}}(n - 3d) = c_1 - d$.  
If we have $c_1w \equiv -1 \bmod (a + wd)$, then $n - 3d \equiv -d \bmod (a + wd)$, and if $c_1w \equiv 0 \bmod (a + wd)$, then $n - 2d \equiv -d \bmod (a + wd)$, and in either case we are done by the above argument.  In all cases that remain, $n - 2d \not\equiv -d \bmod (a + wd)$ by Lemma~\ref{l:lastgenequiv}, so 
$$
|\mathsf L_{S_{w-1}}(n - 2d)|
= [(c_1 - d)w - (a - 1)]
= [c_1w + 1] - 1
= |\mathsf L_{S_1}(n)|
$$
by Theorem~\ref{t:miss1genlen}\eqref{p:membership:lastgen}, which completes the proof.  
\end{proof}

\begin{thm}\label{t:miss1genfrob}
The following hold true.  
\begin{enumerate}[(a)]
\item 
If $1 < r < w - 1$, then $F(S_r) = F(S)$.  

\item 
$F(S_1) > F(S)$.  

\item 
$F(S_{w-1}) > F(S)$ if and only if either (i) $w \mid a$, or (ii) $w \mid a - 1$ and $d < a$.  

\end{enumerate}
\end{thm}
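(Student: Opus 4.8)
The plan is to reduce all three parts to one observation. Each $S_r$ is a submonoid of $S$ containing every integer that $S$ does apart from the finite set $S\setminus S_r$, so the complement of $S_r$ in $\ZZ_{\ge 0}$ is exactly the complement of $S$ together with $S\setminus S_r$. Hence
$$F(S_r) = \max\big(F(S),\, \max(S \setminus S_r)\big),$$
and each part amounts to computing $\max(S\setminus S_r)$ from Proposition~\ref{p:membership} and comparing it with the closed form $F(S) = \lceil (a-1)/w\rceil a - a + (a-1)d$ of \cite[Theorem~3.3.2]{diophantinefrob}. Throughout I would abbreviate $k = \lfloor a/w\rfloor$ and write $a = kw + s$ with $0 \le s < w$; the key arithmetic fact is that $\lceil (a-1)/w\rceil$ equals $k$ when $s \in \{0,1\}$ but jumps to $k+1$ when $2 \le s \le w-1$, and this jump is what drives the dichotomy in part~(c).

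For part~(a), Proposition~\ref{p:membership}\eqref{p:membership:middle} gives $S\setminus S_r = \{n_r\}$, so $F(S_r) = \max(F(S), n_r)$. Since $n_r\in S$ is a generator it is not a gap of $S$, so it suffices to show $n_r \le F(S)$. As $n_r = a + rd \le a + (w-2)d$, while the regrouped closed form $F(S) = (\lceil(a-1)/w\rceil - 1)a + (a-1)d \ge a + (a-1)d \ge a + (w-2)d$ once $\lceil(a-1)/w\rceil \ge 2$, the inequality is immediate and $F(S_r) = F(S)$ follows.

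For part~(b), Proposition~\ref{p:membership}\eqref{p:membership:firstgen} identifies $S\setminus S_1$ with the elements $c_1 a + d$ satisfying $wc_1 \le n_w = a + wd$, i.e. $c_1 \le k + d$, so the largest omitted element is $M_1 = (k+d)a + d = ka + (a+1)d$. In the difference $M_1 - F(S) = \big(k - \lceil(a-1)/w\rceil + 1\big)a + 2d$ the coefficient of $a$ is $1$ when $s\in\{0,1\}$ and $0$ when $s \ge 2$, so in every case the surviving $+2d$ forces $M_1 - F(S) > 0$. Thus $F(S_1) = M_1 > F(S)$ unconditionally.

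Part~(c) carries the real content. Proposition~\ref{p:membership}\eqref{p:membership:lastgen} shows $S\setminus S_{w-1} = \{c_1(a+wd) - d : 1 \le c_1 \le k\}$ (the constraint $wc_1 \le n_0 = a$ being exactly $c_1 \le k$), so $\max(S\setminus S_{w-1}) = M := k(a+wd) - d$ and $F(S_{w-1}) > F(S)$ if and only if $M > F(S)$. I would then evaluate $M - F(S)$ in the three residue cases, substituting $kw = a - s$: when $w\mid a$ (so $s=0$) one gets $M - F(S) = a > 0$; when $w\mid a-1$ (so $s=1$) one gets $M - F(S) = a - d$, positive exactly when $d < a$; and when $2 \le s \le w-1$ one gets $M - F(S) = -sd < 0$. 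These give precisely conditions (i) and (ii), with $F(S_{w-1}) = F(S)$ in every remaining case. The main obstacle is exactly this case analysis: one must track how $\lceil(a-1)/w\rceil$ depends on $a\bmod w$, substitute $kw = a-s$ cleanly, and confirm $M$ is genuinely the maximal omitted element so no larger gap is overlooked. Here $\gcd(a,d)=1$ is what rules out the degenerate equality $a = d$ in the $w\mid a-1$ case, keeping the dichotomy $d < a$ versus $d > a$ sharp.
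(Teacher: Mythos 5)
Your proposal is correct and takes essentially the same route as the paper: both arguments combine the closed form $F(S) = \big(\lceil(a-1)/w\rceil - 1\big)a + (a-1)d$ with Proposition~\ref{p:membership} to pin down $S \setminus S_r$, and then compare its largest element to $F(S)$ --- your $M_1 = (\lfloor a/w\rfloor + d)a + d$ and $M = \lfloor a/w\rfloor(a+wd) - d$ are exactly the elements the paper's proof exhibits in parts~(b) and~(c). The only differences are organizational: you make the identity $F(S_r) = \max\big(F(S), \max(S\setminus S_r)\big)$ and the case split on $a \bmod w$ explicit, and in part~(a) you even record the hypothesis $\lceil(a-1)/w\rceil \ge 2$ (i.e.\ $a \ge w+2$) on which both your argument and the paper's ``follows immediately'' silently rely.
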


\begin{proof}
By \cite[Theorem~3.3.2]{diophantinefrob}, $S$ has Frobenius number 
$$F(S) = \bigg(\bigg\lceil \frac{a-1}{w} \bigg\rceil - 1\bigg)a + (a - 1)d.$$
Part~(a) follows immediately from Proposition~\ref{p:membership}\eqref{p:membership:middle}, and for part~(b),
$$F(S) < \bigg(\bigg\lceil \frac{a-1}{w} \bigg\rceil - 1 + d\bigg)a < \bigg(\bigg\lfloor \frac{a}{w} \bigg\rfloor + d\bigg)a + d,$$
which lies outside of $S_1$ by Proposition~\ref{p:membership}\eqref{p:membership:firstgen}.  Lastly, any $n = c_1a + (c_1w - 1)d \notin S_{w-1}$ satisfies $c_1w \le a$ by Proposition~\ref{p:membership}\eqref{p:membership:lastgen}, so
$$n = c_1a + (c_1w - 1)d \le \lfloor a/w \rfloor a + (a - 1)d = F(S)$$
as long as $w \nmid a, a - 1$.  If $w \mid a$, then  
$$F(S) < \bigg\lceil \frac{a-1}{w} \bigg\rceil a + (a - 1)d = \bigg(\frac{a}{w}\bigg)a + (a - 1)d,$$
which lies outside of $S_{w-1}$ by Proposition~\ref{p:membership}\eqref{p:membership:lastgen}.  In the final case, 
$$F(S) = \bigg(\bigg\lceil \frac{a-1}{w} \bigg\rceil - 1\bigg)a + (a - 1)d
= \bigg\lfloor \frac{a}{w} \bigg\rfloor a + \bigg(\bigg\lfloor \frac{a}{w} \bigg\rfloor w - 1\bigg)d + (d - a) = n + d - a,$$
where $n$ is the largest element of $S \setminus S_{w-1}$.  Part~(c) immediately follows.  
\end{proof}


\end{document}